\documentclass{article}
\usepackage[english]{babel}
\usepackage{amsmath}
\usepackage{amssymb}
\usepackage{amsfonts}
\usepackage{amsthm}
\usepackage{graphicx}

\usepackage{url}

\def\thtext#1{
  \catcode`@=11
  \gdef\@thmcountersep{. #1}
  \catcode`@=12
}

\def\threst{
  \catcode`@=11
  \gdef\@thmcountersep{.}
  \catcode`@=12
}

\theoremstyle{plain}
\newtheorem{thm}{Theorem}[section]
\newtheorem{prop}{Proposition}[section]
\newtheorem{cor}[prop]{Corollary}
\newtheorem{lem}[prop]{Lemma}

\theoremstyle{definition}
\newtheorem{rk}[prop]{Remark}
\newtheorem{dfn}[prop]{Definition}
\newtheorem{constr}[prop]{Construction}

\pagestyle{myheadings}

\catcode`@=11
\def\.{.\spacefactor\@m}
\catcode`@=12

\def\a{\alpha}
\def\dl{\delta}
\def\D{\Delta}
\def\g{\gamma}
\def\l{\lambda}
\def\s{\sigma}

\def\:{\colon}
\def\rom#1{\emph{#1}}
\def\({\rom(}
\def\){\rom)}
\def\sm{\setminus}
\def\ss{\subset}

\def\x{\times}

\def\diam{\operatorname{diam}}
\def\dis{\operatorname{dis}}
\def\mf{\operatorname{mf}}
\def\mst{\operatorname{mst}}
\def\MST{\operatorname{MST}}
\def\opt{{\operatorname{opt}}}
\def\smt{\operatorname{smt}}
\def\SMT{\operatorname{SMT}}

\def\cD{{\cal D}}
\def\cM{{\cal M}}
\def\cP{{\cal P}}
\def\cR{{\cal R}}
\def\cT{{\cal T}}

\begin{document}
\title{Calculation of Minimum Spanning Tree Edges Lengths using Gromov--Hausdorff Distance}
\author{Alexey~A.~Tuzhilin}
\maketitle

\begin{abstract}
In the present paper we show how one can calculate the lengths of edges of a minimum spanning tree constructed for a finite metric space, in terms of the Gromov--Hausdorff distances from this space to simplices of sufficiently large diameter. Here by simplices we mean finite metric spaces all of whose nonzero distances are the same. As an application, we reduce the problems of finding a Steiner minimal tree length or a minimal filling length to maximization of the total distance to some finite number of simplices considered as points of the Gromov--Hausdorff space.
\end{abstract}

\section*{Introduction}
\markright{\thesection.~Introduction}

In this paper we display a marvelous relation between the edges lengths of a minimum spanning tree constructed for a finite metric space, and the Gromov--Hausdorff distances from this space to simplices of sufficiently large diameter, where by simplices we mean finite metric spaces with the same distances between their distinct points. This relation follows from natural minimax formula for calculating the lengths of edges of a minimum spanning tree, which is an analogue of that is usually used for calculating the eigenvalues of a self-adjoint operator. As a corollary, we get some formulaes for calculating the lengths of minimum spanning trees, Steiner minimal trees, and minimal fillings, by means of Gromov--Hausdorff distances.

The author expresses his deep gratitude to his colleague and permanent co-author Alexander O.~Ivanov for useful remarks.

\section{Preliminaries}
\markright{\thesection.~Preliminaries}
Let $X$ be an arbitrary set. Denote by $\#X$ the \emph{cardinality\/} of the set $X$. We are mainly interested in cardinalities of finite sets, thus, for such $X$, the $\#X$ is just the number of elements in $X$.

Let $X$ be an arbitrary metric space. The distance between its points $x$ and $y$ is denoted by $|xy|$. If $A,B\ss X$ are nonempty subsets, then we put $|AB|=\inf\bigl\{|ab|:a\in A,\,b\in B\bigr\}$. If $A=\{a\}$, then we simply write $|aB|=|Ba|$ instead of $|\{a\}B|=|B\{a\}|$.

For every point $x\in X$ and a number $r>0$, we denote by $U_r(x)$ the open ball of center $x$ and radius $r$; for any nonempty $A\ss X$ and $r>0$ we put $U_r(A)=\cup_{a\in A}U_r(a)$.

For nonempty $A,\,B\ss X$ we put
$$
d_H(A,B)=\inf\bigl\{r>0:A\ss U_r(B)\ \&\ B\ss U_r(A)\bigr\}=\max\{\sup_{a\in A}|aB|,\sup_{b\in B}|Ab|\}.
$$
This value if called the \emph{Hausdorff distance between $A$ and $B$}. It is well-known~\cite{BurBurIva} that the Hausdorff distance restricted to the family of all nonempty closed bounded subsets of $X$ is a metric.

Let $X$ and $Y$ be metric spaces. A triple $(X',Y',Z)$ that consists of a metric space $Z$ and its subsets $X'$ and $Y'$ isometric to $X$ and $Y$, respectively, is called a \emph{realization of the pair $(X,Y)$}. The \emph{Gromov--Hausdorff distance $d_{GH}(X,Y)$ between $X$ and $Y$} is the least lower bound of real numbers $r$ such that there exists a realization $(X',Y',Z)$ of the pair $(X,Y)$ with $d_H(X',Y')\le r$. It is well-known~\cite{BurBurIva} that the $d_{GH}$ restricted to the family $\cM$ of isometry classes of compact metric spaces is a metric.

For various calculations of the Gromov--Hausdorff distances, it is convenient to use the technique of correspondences.

Let $X$ and $Y$ be arbitrary nonempty sets. Recall that a \emph{relation\/} between the sets $X$ and $Y$ is a subset of the Cartesian product $X\x Y$. The set of all \textbf{nonempty\/} relations between $X$ and $Y$ we denote by $\cP(X,Y)$. Let us look at each relation $\s\in\cP(X,Y)$ as a multivalued mapping whose domain may be less than the whole $X$. Then, similarly with the case of mappings, for any $x\in X$ and any $A\ss X$ there are defined their images $\s(x)$ and $\s(A)$, and for any $y\in Y$ and any $B\ss Y$ their preimages $\s^{-1}(y)$ and $\s^{-1}(B)$, respectively.

A relation $R\in\cP(X,Y)$ is called a \emph{correspondence\/} if the restrictions of the canonical projections $\pi_X\:(x,y)\mapsto x$ and $\pi_Y\:(x,y)\mapsto y$ onto $R$ are surjective. The set of all correspondences between $X$ and $Y$ we denote by $\cR(X,Y)$.

Let $X$ and $Y$ be arbitrary metric spaces. The \emph{distortion $\dis\s$ of a relation $\s\in\cP(X,Y)$} is the value
$$
\dis\s=\sup\Bigl\{\bigl||xx'|-|yy'|\bigr|: (x,y),(x',y')\in\s\Bigr\}.
$$

\begin{prop}[\cite{BurBurIva}]
For any metric spaces $X$ and $Y$ it holds
$$
d_{GH}(X,Y)=\frac12\inf\bigl\{\dis R:R\in\cR(X,Y)\bigr\}.
$$
\end{prop}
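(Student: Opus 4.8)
The plan is to prove the two inequalities separately, using the correspondence technique in the standard way.

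\emph{First, the easy direction $\frac12\inf\bigl\{\dis R:R\in\cR(X,Y)\bigr\}\le d_{GH}(X,Y)$.} Given any realization $(X',Y',Z)$ of the pair $(X,Y)$ with $d_H(X',Y')<r$, I would produce a correspondence of small distortion. Identifying $X$ with $X'$ and $Y$ with $Y'$, set $R=\bigl\{(x,y):|x'y'|_Z<r\bigr\}$. Since $d_H(X',Y')<r$ forces $X'\ss U_r(Y')$ and $Y'\ss U_r(X')$, every point of $X'$ lies within $r$ of some point of $Y'$ and vice versa, so $R$ is a correspondence. For $(x_1,y_1),(x_2,y_2)\in R$ the triangle inequality in $Z$ gives $\bigl||x_1x_2|-|y_1y_2|\bigr|=\bigl||x_1'x_2'|_Z-|y_1'y_2'|_Z\bigr|\le|x_1'y_1'|_Z+|x_2'y_2'|_Z<2r$, hence $\dis R\le 2r$. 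Since $r$ may be chosen arbitrarily close to $d_{GH}(X,Y)$ from above, this yields the desired inequality.

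\emph{Second, the harder direction $d_{GH}(X,Y)\le\frac12\inf\bigl\{\dis R:R\in\cR(X,Y)\bigr\}$.} Take any correspondence $R\in\cR(X,Y)$ with $\dis R<\infty$ (if no such $R$ exists, the inequality is trivial) and any $r>\frac12\dis R$; I would glue $X$ and $Y$ into one metric space. On the disjoint union $X\sqcup Y$ keep the original metrics on $X$ and on $Y$, and for $x\in X$, $y\in Y$ define
$$
d(x,y)=d(y,x)=\inf\bigl\{|xa|_X+r+|by|_Y:(a,b)\in R\bigr\}.
$$
The step where the hypothesis is essential is the mixed triangle inequality $d(x_1,x_2)\le d(x_1,y)+d(y,x_2)$ for $x_1,x_2\in X$: for $(a_1,b_1),(a_2,b_2)\in R$ one estimates $|x_1a_1|+|x_2a_2|+2r+|b_1y|+|b_2y|\ge|x_1a_1|+|x_2a_2|+|b_1b_2|_Y+2r\ge|x_1a_1|+|x_2a_2|+|a_1a_2|_X\ge|x_1x_2|_X$, using $|b_1b_2|_Y+2r\ge|b_1b_2|_Y+\dis R\ge|a_1a_2|_X$, and then takes the infimum over $R$. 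The remaining triangle inequalities ($XXY$, $XYY$, and those inside $X$ or $Y$) follow at once from the definition and the triangle inequality in $X$ or $Y$; positivity holds since $d(x,y)\ge r>0$; symmetry is built in. So $(X\sqcup Y,d)$ is a genuine metric space.

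\emph{Finally}, I would check that $(X\sqcup Y,d)$ is a realization witnessing $d_{GH}(X,Y)\le r$. For each $x\in X$, surjectivity of the projection of $R$ onto $X$ gives $b\in Y$ with $(x,b)\in R$, whence $d(x,b)\le|xx|_X+r+|bb|_Y=r$; symmetrically each $y\in Y$ lies within $r$ of $X$. Therefore $X\ss U_s(Y)$ and $Y\ss U_s(X)$ for every $s>r$, so $d_H(X,Y)\le r$ in this space and hence $d_{GH}(X,Y)\le r$. Letting $r$ decrease to $\frac12\dis R$ and then taking the infimum over $R\in\cR(X,Y)$ gives the second inequality, and together with the first one this proves the proposition. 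I expect the main obstacle to be the second step — choosing the cross-distances on $X\sqcup Y$ and verifying the mixed triangle inequality — since everything else is routine.
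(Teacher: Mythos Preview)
Your argument is correct and is precisely the standard proof (the one given in the cited reference \cite{BurBurIva}): build a correspondence from a realization via the ``$r$-neighbourhood'' relation for one inequality, and glue $X\sqcup Y$ with cross-distances shifted by $r$ through the correspondence for the other. Note, however, that the paper itself does not prove this proposition at all --- it is simply quoted from \cite{BurBurIva} --- so there is no in-paper proof to compare against; your write-up supplies exactly what the citation points to.
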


If $X$ and $Y$ are finite metric spaces, then the set $\cR(X,Y)$ is finite, thus, there exists an $R\in\cR(X,Y)$ such that $d_{GH}(X,Y)=\frac12\dis R$. Every such $R$ is called \emph{optimal}. Notice that optimal correspondences exist also for arbitrary compact metric spaces $X$ and $Y$, see~\cite{IvaIliadisTuz}. The set of all optimal correspondences between $X$ and $Y$ we denote by $\cR_\opt(X,Y)$.

The next well-know fact may be easily proved by means of the technique of correspondences. For any metric space $X$ and a real number $\l>0$, we denote by $\l X$ the metric space obtained from $X$ by means of $\l$-times extension of all the distances in $X$.

\begin{prop}[\cite{BurBurIva}]\label{prop:scale}
For any $X,Y\in\cM$ and any $\l>0$ we have
$$
d_{GH}(\l X,\l Y)=\l\,d_{GH}(X,Y).
$$
Moreover, for $\l\ne1$ the only space which is not changed under such modification is the one-point space. In other words, the operation of multiplication of metric on a real number $\l>0$ is a homothety of the space $\cM$ with center at the one-point space.
\end{prop}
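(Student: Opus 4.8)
The plan is to reduce everything to the correspondence formula of the previous Proposition, exploiting that $\l X$ and $X$ share the same underlying set, so that $\cR(\l X,\l Y)$ and $\cR(X,Y)$ coincide as sets of relations. First I would note that for a fixed $R\in\cR(X,Y)$ and any $(x,y),(x',y')\in R$, the number $\bigl||xx'|-|yy'|\bigr|$ evaluated with the metrics of $\l X$ and $\l Y$ is exactly $\l$ times the same number evaluated with the metrics of $X$ and $Y$, since every distance gets multiplied by $\l$. Taking the supremum over all such pairs gives $\dis_{\l X,\l Y}R=\l\,\dis_{X,Y}R$. Substituting this into $d_{GH}=\frac12\inf_R\dis R$ and pulling the constant $\l$ out of the infimum yields
$$
d_{GH}(\l X,\l Y)=\frac12\inf\bigl\{\l\,\dis_{X,Y}R:R\in\cR(X,Y)\bigr\}=\l\,d_{GH}(X,Y),
$$
which settles the first claim. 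In the compact (in particular finite) case one also sees that the correspondences optimal for $(X,Y)$ are precisely those optimal for $(\l X,\l Y)$.

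For the ``moreover'' assertion I would argue through the diameter. A space $Z\in\cM$ is left unchanged by $Z\mapsto\l Z$ exactly when $\l Z$ is isometric to $Z$, i.e.\ $d_{GH}(\l Z,Z)=0$, because $d_{GH}$ is a genuine metric on $\cM$. If $Z$ is the one-point space this holds trivially. Conversely, isometric spaces have equal diameters and $\diam(\l Z)=\l\diam Z$, so $\l Z\cong Z$ forces $(\l-1)\diam Z=0$; since $Z$ is compact, $\diam Z<\infty$, so for $\l\ne1$ we get $\diam Z=0$, which by the metric axioms means $\#Z=1$. Finally, having shown that $Z\mapsto\l Z$ scales all Gromov--Hausdorff distances by the factor $\l$ and fixes exactly the one-point space, we may read this off as a homothety of $\cM$ with that ratio and center, which is the last statement.

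I do not expect a genuine obstacle: the argument is essentially the observation that distortion is $1$-homogeneous in the metric, combined with the infimum formula. The only points deserving a word of care are the identification $\cR(\l X,\l Y)=\cR(X,Y)$ (so that the same index set appears on both sides of the infimum) and the use of compactness to pass from $\diam Z=0$ to $Z$ being a single point.
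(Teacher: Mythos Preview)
Your argument is correct. Note, however, that the paper does not actually supply its own proof of this proposition: it is quoted from~\cite{BurBurIva}, and the only comment the paper makes is the remark immediately preceding the statement that ``the next well-known fact may be easily proved by means of the technique of correspondences.'' Your proposal carries out precisely that suggestion, so there is nothing to compare against beyond saying that your approach is the one the paper implicitly has in mind.

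One tiny quibble on the ``moreover'' part: compactness is used to guarantee $\diam Z<\infty$ (so that $\l\diam Z=\diam Z$ forces $(\l-1)\diam Z=0$ rather than the vacuous $\infty=\infty$), not to pass from $\diam Z=0$ to $\#Z=1$; the latter implication holds in any metric space by the axioms alone.
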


Let $G=(V,E)$ be an arbitrary graph with the vertices set $V$ and the edges set $E$. We say that the graph $G$ is \emph{defined on a metric space $X$}, if $V\ss X$. For every such a graph, there are defined the \emph{lengths $|e|$} of its \emph{edges $e=vw$} as the distances $|vw|$ between the ends $v$ and $w$ of these edges; the \emph{length $|G|$} of the \emph{graph $G$} itself is the sum of its edges lengths.

If $M\ss X$ is an arbitrary nonempty finite subset, and $G=(V,E)$ is a connected graph on $X$, then we say that $G$ \emph{joins $M$} if $M\ss V$. The least lower bound of the lengths of the connected graphs that join $M\ss X$ is called the \emph{length of Steiner minimal tree on $M$} and is denoted by $\smt(M,X)$. Every connected graph $G$ that joins $M$ and such that $|G|=\smt(M,X)$ is a tree which is called a \emph{Steiner minimal tree on $M$}. The set of all Steiner minimal trees on $M$ we denote by $\SMT(M,X)$. Notice that the set $\SMT(M,X)$ may be empty, and that $\SMT(M,X)$ and $\smt(M,X)$ depend not only on the distances between the points of $M$, but also on the geometry of the ambient space $X$: isometrical $M$ that belong to different metric spaces $X$, may be joined by Steiner minimal trees of nonequal lengths. Some details on the theory of Steiner minimal trees one can find, for example, in~\cite{IvaTuz} or~\cite{HwRiW}.

Denote by $\cT(M,X)$ the set of all trees $G=(V,E)$ on $X$ that join $M$ and satisfy $\#V\le2\#M-2$. The next result follows easily from~\cite{IvaTuz}.

\begin{prop}[\cite{IvaTuz}]\label{prop:Steiner-regular}
For any metric space $X$ and any its nonempty finite subset $M$ we have
$$
\smt(M,X)=\inf_{G\in\cT(M,T)}|G|.
$$
\end{prop}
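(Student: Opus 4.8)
The plan is to prove the two inequalities between $\smt(M,X)$ and $\inf_{G\in\cT(M,X)}|G|$ separately. One direction is immediate: every $G\in\cT(M,X)$ is in particular a connected graph on $X$ joining $M$, so $\smt(M,X)\le\inf_{G\in\cT(M,X)}|G|$. For the reverse inequality it suffices to show that for \emph{every} connected graph $G$ on $X$ joining $M$ there is a tree $T\in\cT(M,X)$ with $|T|\le|G|$; passing to the infimum over all such $G$ then gives $\inf_{G\in\cT(M,X)}|G|\le\smt(M,X)$.

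To build such a $T$ I would start from any spanning tree of $G$: it has the same vertex set, hence still lies on $X$ and joins $M$, and its length is at most $|G|$ since deleting edges cannot increase the length. Then I simplify it in two stages. \emph{Stage 1:} while the current tree has a leaf not in $M$, delete that leaf and its incident edge; this never removes a vertex of $M$, does not increase the length, and strictly decreases the number of vertices, so it terminates with a tree all of whose leaves lie in $M$. \emph{Stage 2:} while there is a vertex $v\notin M$ of degree $2$, with neighbours $u,w$, delete $v$ and add the edge $uw$; here $u\ne w$ and $uw$ is not already present because a tree is acyclic, so the result is again a tree on $X$ joining $M$, and by the triangle inequality $|uw|\le|uv|+|vw|$ the length does not grow, while the vertex count drops, so this stage also terminates. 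Suppressing a degree-$2$ vertex leaves every other vertex's degree unchanged, so Stage 2 neither creates a new leaf outside $M$ nor a new degree-$2$ vertex outside $M$; thus the resulting tree $T$ lies on $X$, joins $M$, has $|T|\le|G|$, and every vertex in $V(T)\sm M$ has degree at least $3$.

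It remains to bound $\#V(T)$. Assume $\#M=k\ge2$ (the case $k=1$ is trivial, with $\smt(M,X)=0$). Let $s$ be the number of vertices of $T$ not in $M$, so $\#V(T)=k+s$ and $T$ has $k+s-1$ edges. Summing vertex degrees, $2(k+s-1)=\sum_{x\in V(T)}\deg x\ge 3s+k$, since each of the $s$ vertices outside $M$ has degree at least $3$ and each of the $k$ vertices of $M$ has degree at least $1$. This gives $s\le k-2$, hence $\#V(T)=k+s\le 2k-2=2\#M-2$, so $T\in\cT(M,X)$, finishing the proof. I do not expect a real obstacle here; the only points requiring a little care are checking that each simplification move genuinely yields a tree on $X$ that joins $M$ — in particular that suppressing a degree-$2$ vertex produces no multiple edge, which rests on acyclicity — and the degree-sum bookkeeping, which is the classical fact that a Steiner tree on $k$ terminals has at most $k-2$ Steiner points.
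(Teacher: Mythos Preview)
Your argument is correct and is the standard reduction: pass to a spanning tree, prune non-terminal leaves, suppress non-terminal degree-$2$ vertices via the triangle inequality, and then use the handshake identity to bound the number of remaining Steiner vertices by $k-2$. The paper itself supplies no proof of this proposition; it is quoted from~\cite{IvaTuz} with the remark that it ``follows easily'' from that source, so there is nothing in the text to compare against. What you have written is precisely the classical argument that sits behind such a citation, and all the care points you flag (acyclicity preventing a multi-edge when suppressing a degree-$2$ vertex, degrees of the surviving vertices being unchanged by that operation) are handled correctly.

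One small caveat concerns your treatment of $\#M=1$. For $k=1$ the constraint $\#V\le 2k-2=0$ forces $\cT(M,X)=\emptyset$, so the right-hand side is $\inf\emptyset=+\infty$, while $\smt(M,X)=0$; the equality as literally stated fails there. This is a defect of the formulation rather than of your proof, and the proposition is only needed (and only meaningful) for $\#M\ge2$, which is the case you actually argue.
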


Let $M$ be a finite metric space. Define a value $\mst(M)$ as the length of the shortest tree among the trees of the form $(M,E)$. This value is called the \emph{length of minimum spanning tree on $M$}; a tree $G=(M,E)$ such that $|G|=\mst(M)$ is called a \emph{minimum spanning tree on $M$}. Notice that for any $M$ there exists a minimum spanning tree on $M$. The set of all minimum spanning trees on $M$ we denote by $\MST(M)$.

Proposition~\ref{prop:Steiner-regular} may be naturally rewritten in terms of minimum spanning trees. For any finite metric space $M$ we denote by $\cM(M)\ss\cM$ the set of isometry classes of all metric spaces $V$ such that $M\ss V$ and $\#V\le2\#M-2$. If $M$ is a finite subset of a metric space $X$, then we denote by $\cM(M,X)$ the subset of $\cM(M)$ that consists of isometry classes of all subsets $V\ss X$ such that $M\ss V$ and $\#V\le2\#M-2$.

\begin{prop}[\cite{IvaTuz}]\label{prop:smt-in-terms-mst}
For any metric space $X$ and any its nonempty finite subset $M$ we have
$$
\smt(M,X)=\inf_{V\in\cM(M,X)}\mst(V).
$$
\end{prop}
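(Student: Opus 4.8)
The plan is to deduce the statement directly from Proposition~\ref{prop:Steiner-regular}, which already reduces $\smt(M,X)$ to an infimum of lengths of trees $G=(V,E)$ on $X$ that join $M$ and satisfy $\#V\le2\#M-2$. The bridge is the elementary observation that a tree of the form $(V,E)$ is nothing but a spanning tree of the finite metric space $V$ (equipped with the restriction of the metric of $X$), so its length is at least $\mst(V)$; conversely, a minimum spanning tree of such a $V$, realised inside $X$, is one of the competitors counted by $\cT(M,X)$. Before doing this I would record that $\mst$ is an isometry invariant — an isometry $M\to M'$ carries trees on $M$ to trees on $M'$ of the same length — so that it descends to a well-defined function on the isometry classes forming $\cM(M,X)$, and that, by the very definition of $\cM(M,X)$, every class in it has a representative $V$ with $M\ss V\ss X$ and $\#V\le2\#M-2$.

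For the inequality ``$\le$'', take any $G=(V,E)\in\cT(M,X)$. Then $V\ss X$, $M\ss V$, $\#V\le2\#M-2$, so the isometry class of $V$ lies in $\cM(M,X)$; since $G$ is a spanning tree of $V$ we get $\mst(V)\le|G|$, hence $\inf_{V'\in\cM(M,X)}\mst(V')\le|G|$. Taking the infimum over $G\in\cT(M,X)$ and using Proposition~\ref{prop:Steiner-regular} gives $\inf_{V'\in\cM(M,X)}\mst(V')\le\smt(M,X)$. For ``$\ge$'', pick any class in $\cM(M,X)$ together with a representative $V$ such that $M\ss V\ss X$ and $\#V\le2\#M-2$, and choose $T=(V,E)\in\MST(V)$, which exists. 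Then $T$ is a connected graph on $X$ that joins $M$ and has $\#V\le2\#M-2$, i.e.\ $T\in\cT(M,X)$, and $|T|=\mst(V)$, so Proposition~\ref{prop:Steiner-regular} yields $\smt(M,X)\le\mst(V)$. Taking the infimum over $\cM(M,X)$ completes the argument.

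The computation is entirely routine; the only point requiring a little care is the bookkeeping with isometry classes — checking that $\mst$ is genuinely a function on $\cM(M,X)$ and that each such class can be realised inside $X$ in a way that still contains $M$ literally, not merely up to isometry. Once this is set up, the two inclusions above are immediate, and the degenerate case $\#M=1$ (where both sides vanish) can be handled separately should the conventions of Proposition~\ref{prop:Steiner-regular} demand it.
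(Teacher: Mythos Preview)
The paper does not actually supply a proof of this proposition: it is quoted from~\cite{IvaTuz}, and the only hint the paper gives is the sentence immediately preceding it, namely that ``Proposition~\ref{prop:Steiner-regular} may be naturally rewritten in terms of minimum spanning trees.'' Your argument carries out precisely that rewriting --- splitting the infimum over $\cT(M,X)$ into an outer infimum over vertex sets $V$ and an inner infimum over spanning trees of $V$ --- and is correct, including your remarks about the isometry-invariance bookkeeping and the degenerate case $\#M=1$.
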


Now, let us fix a finite metric space $M$. Consider isometrical embeddings of $M$ into various metric spaces $X$, and for every such embedding calculate the length of Steiner minimal tree on the image of $M$. The ``shortest'' length of these Steiner minimal trees we call the \emph{length of minimal filling for $M$}, and this value we denote by $\mf(M)$. To overcome the problems like Cantor's paradox, let us give a more accurate definition. We define the value $\mf(M)$ to be the greatest lower bound of the real numbers $r$ such that there exist a metric space $X$ and an isometric embedding $\mu\:M\to X$ with $\smt\bigl(\mu(M),X\bigr)\le r$.

The next result easily follows from~\cite{ITMinFil}.

\begin{prop}[\cite{ITMinFil}]\label{prop:mf-in-terms-mst}
For any finite metric space $M$ it holds
$$
\mf(M)=\inf_{V\in\cM(M)}\mst(V).
$$
\end{prop}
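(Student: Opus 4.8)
The plan is to obtain this as a short consequence of Proposition~\ref{prop:smt-in-terms-mst}, so that the statement becomes pure bookkeeping and the appeal to~\cite{ITMinFil} is not even needed. Unwinding the definition, $\mf(M)$ is the infimum of those real numbers $r$ for which there exist a metric space $X$ and an isometric embedding $\mu\:M\to X$ with $\smt\bigl(\mu(M),X\bigr)\le r$. For each such $X$ and $\mu$, Proposition~\ref{prop:smt-in-terms-mst} rewrites $\smt\bigl(\mu(M),X\bigr)=\inf_{W\in\cM(\mu(M),X)}\mst(W)$. Hence everything reduces to comparing the families $\cM(\mu(M),X)$, as $X$ and $\mu$ vary, with the single family $\cM(M)$.

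First I would prove ``$\ge$''. Fix any admissible $r$, witnessed by some $X$ and $\mu$. Each $W\in\cM(\mu(M),X)$ is, up to isometry, a metric space containing the isometric copy $\mu(M)$ of $M$ and having at most $2\#M-2$ points, so its isometry class belongs to $\cM(M)$; thus $\cM(\mu(M),X)\ss\cM(M)$ inside $\cM$. Passing to the infimum over the larger set can only decrease the value, so
$$
\inf_{V\in\cM(M)}\mst(V)\ \le\ \inf_{W\in\cM(\mu(M),X)}\mst(W)\ =\ \smt\bigl(\mu(M),X\bigr)\ \le\ r.
$$
Since this holds for every admissible $r$, the number $\inf_{V\in\cM(M)}\mst(V)$ is a lower bound for the set defining $\mf(M)$, whence $\inf_{V\in\cM(M)}\mst(V)\le\mf(M)$.

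Next, ``$\le$''. Fix an isometry class in $\cM(M)$ and pick a representative $V$ with $M\ss V$. The inclusion of $M$ into $V$ is an isometric embedding, and since $\#V\le2\#M-2$ we have $V\in\cM(M,V)$; therefore Proposition~\ref{prop:smt-in-terms-mst} gives $\smt(M,V)=\inf_{W\in\cM(M,V)}\mst(W)\le\mst(V)$. Consequently every $r>\mst(V)$ is admissible in the definition of $\mf(M)$, so $\mf(M)\le\mst(V)$; taking the infimum over all isometry classes in $\cM(M)$ yields $\mf(M)\le\inf_{V\in\cM(M)}\mst(V)$. The two inequalities combine to the claimed equality.

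I do not expect a genuine obstacle: once Proposition~\ref{prop:smt-in-terms-mst} is available the argument is essentially formal. The only points needing care are (a) respecting the ``greatest lower bound of $r$'' phrasing of the definition of $\mf$, adopted precisely to avoid Cantor-type set-theoretic difficulties, which is why in the ``$\ge$'' step I argue through admissible numbers $r$ rather than writing an infimum indexed by a proper class of pairs $(X,\mu)$; and (b) the harmless passage between a concrete subset $W\ss X$ with $\mu(M)\ss W$ and its isometry class viewed as a point of $\cM(M)$. Finally, the degenerate case $\#M=1$ should be checked by hand (both sides equal $0$), as usual whenever the cardinality bound ``$2\#M-2$'' is in force.
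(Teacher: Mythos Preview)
The paper does not actually prove this proposition; it merely records it as a consequence of~\cite{ITMinFil} and moves on. Your argument is therefore not competing with anything in the text, and it is correct: once Proposition~\ref{prop:smt-in-terms-mst} is available, the two inclusions $\cM(\mu(M),X)\ss\cM(M)$ (for every ambient $X$) and $V\in\cM(M,V)$ (for every $V\in\cM(M)$) give the two inequalities exactly as you describe. In this sense you have replaced the appeal to~\cite{ITMinFil} by an internal deduction from Proposition~\ref{prop:smt-in-terms-mst}, which is a small gain in self-containedness.

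One small correction to your closing remark: for $\#M=1$ the family $\cM(M)$ is empty (the constraint $\#V\le 2\#M-2=0$ is incompatible with $M\ss V$), so the right-hand side is the infimum over the empty set, i.e.\ $+\infty$, not $0$. Thus the identity as stated actually fails in that degenerate case; the tacit standing assumption in the paper is $\#M\ge2$, and you should say so rather than claim both sides vanish.
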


\section{$\mst$-spectrum of finite metric space}
\markright{\thesection.~$\mst$-spectrum of finite metric space}
Notice that minimum spanning tree may be defined not uniquely. For $G\in\MST(M)$ we denote by $\s(G)$ the vector whose coordinates are the lengths of edges of the tree $G$, ordered descending. The next result is well-known, however, we present its proof for the reason of completeness.

\begin{prop}\label{prop:mst-spect}
For any $G_1,G_2\in\MST(M)$ we have $\s(G_1)=\s(G_2)$.
\end{prop}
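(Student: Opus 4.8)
The statement is the classical fact that all minimum spanning trees of a finite metric space have the same multiset of edge lengths. The plan is to argue by an exchange argument on the sorted edge-length vectors.

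First I would set up notation: write $\sigma(G_1)=(a_1\ge a_2\ge\cdots\ge a_{n-1})$ and $\sigma(G_2)=(b_1\ge b_2\ge\cdots\ge b_{n-1})$, where $n=\#M$, and suppose for contradiction that $\sigma(G_1)\ne\sigma(G_2)$. Let $k$ be the first index at which they differ; without loss of generality $a_k>b_k$. Consider the edge $e$ of $G_1$ of length $a_k$ (the $k$-th longest; if several edges share this length, take the one that is $k$-th in the chosen order). Deleting $e$ from $G_1$ splits $M$ into two components $M'$ and $M''$. Since $G_2$ is connected and joins all of $M$, it must contain at least one edge $f$ with one endpoint in $M'$ and one in $M''$; in fact the unique path in $G_2$ between the two endpoints of $e$ must cross the cut, so such an $f$ exists in $G_2$.

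The key step is to compare lengths. The edges of $G_2$ that are at least as long as $a_k$ are among $b_1,\dots,b_{k-1}$, hence there are at most $k-1$ of them, and by the choice of $k$ these coincide with $a_1,\dots,a_{k-1}$, the $k-1$ longest edges of $G_1$, which are exactly the edges of $G_1$ strictly separated from $e$ in the sorted order. I would argue that the crossing edge $f$ of $G_2$ cannot be among these $k-1$ long edges: the edges $a_1,\dots,a_{k-1}$ of $G_1$, together with $e$, when restricted to a single component of $G_1-e$, still form a forest, and a short combinatorial count shows that at least one crossing edge $f$ of $G_2$ has length $|f|\le b_k<a_k=|e|$. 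Then swapping — form $G_1'=(M,(E(G_1)\setminus\{e\})\cup\{f\})$, which is again a spanning tree since $f$ reconnects the two components — yields $|G_1'|=|G_1|-|e|+|f|<|G_1|=\mst(M)$, contradicting minimality of $G_1$. Hence $\sigma(G_1)=\sigma(G_2)$.

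The main obstacle is making the counting in the key step airtight: one must verify carefully that among the edges of $G_2$ crossing the cut induced by $G_1-e$, at least one has length $<a_k$, using that $G_2$ has at most $k-1$ edges of length $\ge a_k$ and that these must be ``spread out'' across the components determined by the top $k-1$ edges of $G_1$. A clean way to handle this is to process the cut structure: the top $k-1$ edges of $G_1$, being also the top $k-1$ edges of $G_2$ by the choice of $k$, cut $M$ into the same collection of ``heavy components'' for both trees (this needs a small lemma that the forest formed by all $G$-edges of length $\ge t$ has a connected-components structure independent of $G\in\MST(M)$, which itself can be proved by the same exchange idea or taken as the inductive content); then the cut defined by deleting $e$ refines this, and $G_2$ must connect the two sides using an edge strictly shorter than $a_k$, since all of $G_2$'s edges of length $\ge a_k$ lie within single heavy components. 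I would streamline the write-up by first establishing this ``heavy-components are tree-independent'' lemma and then deriving the proposition as an immediate corollary.
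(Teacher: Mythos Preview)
Your central step does not go through. You want a $G_2$-edge $f$ crossing the cut $(M',M'')$ determined by $G_1\setminus\{e\}$ with $|f|<a_k=|e|$, so that $G_1-e+f$ is strictly shorter than $G_1$. But no edge whatsoever crossing this cut can have length $<|e|$: if one did, the very swap you describe would already contradict $G_1\in\MST(M)$, with no hypothesis on $G_2$ needed. This is exactly the cut property of minimum spanning trees, and it blocks the inequality your ``short combinatorial count'' is supposed to produce. Your proposed repair has two further slips. The claim that the top $k-1$ edges of $G_1$ are ``also the top $k-1$ edges of $G_2$'' conflates equal \emph{lengths} with equal \emph{edges}; only $a_i=b_i$ for $i<k$ is given. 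And the lemma that the forest of $G$-edges of length $\ge t$ has tree-independent components is false: on four points with $|ab|=|cd|=1$ and all other distances $2$, the two minimum spanning trees $\{ab,cd,ac\}$ and $\{ab,cd,bd\}$ have heavy forests $\{ac\}$ and $\{bd\}$ at $t=2$, with different component partitions. The correct (standard) statement concerns the \emph{light}-edge forest, and once you have it the contradiction follows from a component count directly---or, if you prefer a swap argument, delete all of $e_1,\dots,e_k$ from $G_1$ to get $k+1$ pieces, observe that $G_2$ must contain at least $k$ inter-piece edges each of length $\ge a_k$ by the cycle property, while $G_2$ has only $k-1$ edges of length $\ge a_k$ since $b_k<a_k$.

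For comparison, the paper takes a different, constructive route: pick any $e\in E(G_2)\setminus E(G_1)$, locate an $e'\in E(G_1)\setminus E(G_2)$ lying on the cycle of $G_1\cup\{e\}$ and simultaneously crossing the cut of $G_2\setminus\{e\}$, use minimality of $G_1$ to get $|e'|\le|e|$ and minimality of $G_2$ to get $|e|\le|e'|$, hence $|e|=|e'|$; swapping $e'$ for $e$ in $G_1$ gives another minimum spanning tree with the same $\s$ and one more edge in common with $G_2$, and iterating finishes.
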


\begin{proof}
Recall a standard algorithm of transferring from one minimum spanning tree to another one, see~\cite{Harary}.

Let $G_1\ne G_2$. Put $G_i=(M,E_i)$, then $E_1\ne E_2$ and $\#E_1=\#E_2$, therefore, there exists $e\in E_2\sm E_1$. The graph $G_1\cup e$ has a cycle $C$ that contains the edge $e$. The remaining edges of $C$ are not longer than $e$, otherwise $G_1\not\in\MST(M)$. The forest $G_2\sm e$ consists of two trees whose vertices sets we denote by $V'$ and $V''$. Clearly that $M=V'\sqcup V''$. The cycle $C$ contains an edge $e'\ne e$ that joins a vertex from $V'$ with a vertex from $V''$. This edge does not belong to $E_2$, otherwise $G_2$ contains a cycle. Hence, $e'\in E_1\sm E_2$.

The graph $G_2\cup e'$ contains a cycle $C'$. By the choice of $e'$, the cycle $C'$ contains the edge $e$ as well. Similarly with that was discussed above, the length of $e$ is less than or equal to the length of $e'$, otherwise $G_2\not\in\MST(M)$. Therefore, $|e|=|e'|$.

Let us modify the graph $G_1$ by changing the edge $e'$ to $e$. The result is a tree $G_1'$ of the same length, i.e., $G_1$ is a minimum spanning tree as well. Notice that $G_1'$ and $G_2$ have one common edge more than the trees $G_1$ and $G_2$. Thus, we can transfer $G_1$ to $G_2$ while finite number of steps, passing through minimum spanning trees. It remains to notice that $\s(G_1')=\s(G_1)$, hence, $\s(G_1)=\s(G_2)$.
\end{proof}

Proposition~\ref{prop:mst-spect} explains correctness of the following definition.

\begin{dfn}
For any finite metric space $M$, we denote by $\s(M)$ the vector $\s(G)$ for arbitrary $G\in\MST(M)$, and we call this vector by the \emph{$\mst$-spectrum of the space $M$}.
\end{dfn}

\begin{constr}
For any set $M$ we denote by $\cD_k(M)$ the family of all possible partitions of the $M$ to $k$ its nonempty subsets. Suppose now that $M$ is a metric space and $D=\{M_1,\ldots,M_k\}\in\cD_k(M)$. Put
$$
\a(D)=\min\bigl\{|M_iM_j|:i\ne j\bigr\}.
$$
\end{constr}

\begin{thm}\label{thm:spect-calc}
Let $M$ be a finite metric space and $\s(M)=(\s_1,\ldots,\s_{n-1})$. Then
$$
\s_k=\max\bigl\{\a(D):D\in\cD_{k+1}(M)\bigr\}.
$$
\end{thm}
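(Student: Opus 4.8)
The plan is to prove the two inequalities $\s_k\ge\a(D)$ for every $D\in\cD_{k+1}(M)$ and $\s_k\le\a(D)$ for some well-chosen $D$, using the structure of a fixed minimum spanning tree $G=(M,E)\in\MST(M)$. Write $n=\#M$, so $G$ has $n-1$ edges whose lengths, sorted descending, are $\s_1\ge\cdots\ge\s_{n-1}$.

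First I would establish $\s_k\le\a(D)$ for a particular partition. Take $G\in\MST(M)$ and delete from $G$ its $k$ longest edges, i.e.\ the edges realizing $\s_1,\ldots,\s_k$. The result is a forest with exactly $k+1$ connected components; let $D=\{M_1,\ldots,M_{k+1}\}$ be the partition of $M$ by the vertex sets of these components, so $D\in\cD_{k+1}(M)$. I claim $\a(D)\ge\s_k$. Indeed, fix $i\ne j$ and points $a\in M_i$, $b\in M_j$; I must show $|ab|\ge\s_k$. Consider the graph obtained by adding the edge $ab$ to $G$; it contains a unique cycle $C$ through $ab$, and since $a,b$ lie in different components of the pruned forest, $C$ must traverse at least one of the $k$ deleted long edges, say $e$ with $|e|\ge\s_k$. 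Removing $e$ from $G$ and inserting $ab$ yields a spanning tree; by minimality of $G$ we get $|ab|\ge|e|\ge\s_k$. Hence $|M_iM_j|\ge\s_k$ for all $i\ne j$, so $\a(D)\ge\s_k$, and therefore $\max\{\a(D):D\in\cD_{k+1}(M)\}\ge\s_k$.

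Next I would prove $\s_k\ge\a(D)$ for every $D=\{M_1,\ldots,M_{k+1}\}\in\cD_{k+1}(M)$. Fix such a $D$ and again fix $G=(M,E)\in\MST(M)$. Contract each block $M_i$ of $D$ to a point; since $G$ is connected and spans $M$, the contracted multigraph on $k+1$ vertices is connected, hence contains at least $k$ edges of $G$ that join two distinct blocks (more precisely, choosing a spanning tree of the contraction picks out $k$ such edges $e_1,\ldots,e_k$ of $G$, each with both endpoints in different blocks of $D$). For each such edge $e_t=v w$ with $v\in M_{i}$, $w\in M_{j}$, $i\ne j$, we have $|e_t|=|vw|\ge|M_iM_j|\ge\a(D)$. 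Thus $G$ has at least $k$ edges of length $\ge\a(D)$, which forces its $k$-th largest edge length to satisfy $\s_k\ge\a(D)$. Combining the two bounds gives the theorem.

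The main obstacle, and the step deserving the most care, is the counting argument in the second inequality: I must argue rigorously that a spanning tree of $M$ necessarily contains at least $k$ edges crossing between blocks of any partition into $k+1$ parts. This is the standard fact that contracting the blocks of $G$ yields a connected graph on $k+1$ vertices, whose spanning tree has exactly $k$ edges, each lifting to a genuine cross-block edge of $G$; I would phrase it via the forest/component count (deleting all cross-block edges of $G$ leaves each block intact but may leave it disconnected, so there are at most—and by connectivity of $G$, after contraction, a spanning subset of exactly—$k$ independent cross edges). The first inequality's cycle argument is essentially the exchange argument already used in the proof of Proposition~\ref{prop:mst-spect}, so it should go through smoothly; one only needs to be slightly careful that the cycle $C$ created by adding $ab$ genuinely crosses one of the $k$ removed edges, which follows because $a$ and $b$ are separated in $G$ minus those $k$ edges.
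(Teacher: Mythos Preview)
Your proposal is correct and follows essentially the same route as the paper. The paper likewise deletes the $k$ longest edges of a fixed $G\in\MST(M)$ to obtain the witnessing partition $D$ and proves $\a(D)=\s_k$ via the same exchange argument you use, and for an arbitrary $(k{+}1)$-partition $D'$ it uses the identical connectivity count (at least $k$ edges of $G$ must cross between blocks, each of length $\ge\a(D')$) to conclude $\a(D')\le\s_k$; the only cosmetic differences are that the paper phrases the first step via the unique path in $G$ rather than the cycle in $G\cup\{ab\}$, and states the counting step directly rather than through contraction.
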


\begin{proof}
Let $G=(M,E)\in\MST(M)$ and the set $E$ be ordered in such a way that $|e_i|=\s_i$. Denote by $D=\{M_1,\ldots,M_{k+1}\}$ the partition of $M$ to the vertices sets of trees of the forest $G\sm\cup_{i=1}^ke_i$.

\begin{lem}\label{lem:canon-part}
It holds $\a(D)=|e_k|$.
\end{lem}

\begin{proof}
Indeed, let us choose arbitrary $M_i$ and $M_j$, $i\ne j$, some their points $P_i$ and $P_j$, respectively, and denote by $\g$ the unique path in $G$ that joins $P_i$ and $P_j$. Then $\g$ contains an edge $e_p$, $1\le p\le k$. However, by minimality of the tree $G$, we have $|P_iP_j|\ge|e_p|\ge\min_{i=1}^k|e_i|=|e_k|$, hence, $|M_iM_j|\ge|e_k|$, so $\a(D)\ge|e_k|$. On the other hand, if $i$ and $j$ are chosen in such a way that $e_k$ joins $M_i$ and $M_j$, then we get $\a(D)\le|M_iM_j|=|e_k|$.
\end{proof}

Now, consider an arbitrary partition $D'=\{M_1',\ldots,M_{k+1}'\}$.

\begin{lem}\label{lem:noncanon-part-less}
We have $\a(D')\le\a(D)$.
\end{lem}

\begin{proof}
By Lemma~\ref{lem:canon-part}, it suffices to show that $\a(D')\le|e_k|$. Denote by $E'$ the set consisting of all edges $e_p\in E$ such that there exist $M_i'$ and $M_j'$, $i\ne j$, and $e_p$ joins $M_i'$ and $M_j'$. Since $G$ is connected, the set $E'$ consists at least of $k$ edges, because otherwise the indices set $\{1,\ldots,k+1\}$ can be partitioned to two nonempty subsets $I$ and $J$ such that the sets $\cup_{i\in I}M_i'$ and $\cup_{j\in J}M_j'$ that generate a partition of $M$, can not be joined by an edge from $E$. On the other hand, if some $M_i'$ and $M_j'$ are joined by an edge $e'\in E$, then $|M_i'M_j'|\le|e'|$, hence,  $\a(D')=\min|M_i'M_j'|\le\min_{e'\in E'}|e'|\le|e_k|$.
\end{proof}

Lemma~\ref{lem:noncanon-part-less} completes the proof of the theorem.
\end{proof}

\section{Calculation of $\mst$-spectrum using Gromov--Hausdorff distances}
\markright{\thesection.~Calculation of $\mst$-spectrum using Gromov--Hausdorff distances}

\begin{constr}
For $\l>0$ we denote by $\l\D_n$ the metric space that consists of $\{1,\ldots,n\}$ and all its nonzero distances equal $\l$. If $\l=1$, then the space $\l\D_n$ we simply denote by $\D_n$.
\end{constr}

In the present section we show that the $\mst$-spectrum of an $n$-points metric space can be represented as a linear function on the Gromov--Hausdorff distances from this space to the simplices $\l\D_2,\ldots,\l\D_{n+1}$ for $\l\ge2\diam X$.

\begin{thm}\label{thm:spectrum-as-GH}
Let $X\in\cM$ be a finite metric space, $\s(X)=(\s_1,\ldots,\s_{n-1})$, $\diam X\le1/2$. Then $\s_k=1-2d_{GH}(X,\D_{k+1})$.
\end{thm}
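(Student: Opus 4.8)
The plan is to connect the combinatorial formula of Theorem~\ref{thm:spect-calc} with the Gromov--Hausdorff distance to the simplex $\D_{k+1}$ via the correspondence technique of Proposition~1. The key observation is that a correspondence $R\in\cR(X,\D_{k+1})$ is essentially the same data as an ordered partition of $X$ into at most $k+1$ nonempty subsets: each point $i\in\{1,\ldots,k+1\}$ of $\D_{k+1}$ corresponds to the nonempty set $R^{-1}(i)\ss X$, and these sets cover $X$. When some points of $X$ lie in several $R^{-1}(i)$, the estimates only get worse, so intuitively the optimal correspondences come from genuine partitions $D\in\cD_m(X)$ with $m\le k+1$.

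First I would compute $\dis R$ for such a correspondence in terms of the associated partition. Write $X=\bigcup_{i=1}^{k+1}R^{-1}(i)$. Pairs $(x,i),(x',i)$ with the same second coordinate contribute $|xx'|\le\diam X\le 1/2$, since the corresponding distance in $\D_{k+1}$ is $0$. Pairs $(x,i),(x',j)$ with $i\ne j$ contribute $\bigl||xx'|-1\bigr|$; since $|xx'|\le\diam X\le 1/2$, we have $\bigl||xx'|-1\bigr|=1-|xx'|$, and the supremum of this over all such pairs equals $1-\min\{|xx'|: x\in R^{-1}(i),\,x'\in R^{-1}(j),\,i\ne j\}$. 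Thus, if the sets $R^{-1}(i)$ are pairwise disjoint (a true partition into exactly $k+1$ blocks), then $\dis R=\max\{\diam X,\,1-\a(D)\}$ where $D$ is that partition; and since $\a(D)\le\diam X\le 1/2$ we get $1-\a(D)\ge 1/2\ge\diam X$, so $\dis R=1-\a(D)$. Minimizing over such $R$ and invoking Theorem~\ref{thm:spect-calc} gives $\frac12\inf\dis R\le\frac12(1-\s_k)$, hence $d_{GH}(X,\D_{k+1})\le\frac12(1-\s_k)$, i.e. $\s_k\le 1-2d_{GH}(X,\D_{k+1})$.

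For the reverse inequality I would take an \emph{arbitrary} optimal $R\in\cR_\opt(X,\D_{k+1})$ and show $\dis R\ge 1-\s_k$. The sets $B_i=R^{-1}(i)$, $i=1,\ldots,k+1$, are nonempty and cover $X$ but may overlap. From the overlapping sets one extracts a genuine partition $D'=\{M_1',\ldots,M_{m}'\}$ of $X$ with $m\le k+1$: for instance, assign each $x\in X$ to $M_i'$ where $i$ is the least index with $x\in B_i$, discarding empty classes. One checks $\a(D')\ge\min\{|xx'|:x\in B_i,\,x'\in B_j,\,i\ne j\}$ because whenever $x\in M_i'$ and $x'\in M_j'$ with $i<j$ we have $x\in B_i$ and $x'\in B_j$ with $i\ne j$. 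If $m=k+1$, then by Theorem~\ref{thm:spect-calc} $\a(D')\le\s_k$, so $\dis R\ge 1-\a(D')\ge 1-\s_k$. If $m\le k$, pad $D'$ to a partition into exactly $k+1$ blocks by splitting off singletons; since $X$ is finite with $n\ge k+1$ points this is possible, the value $\a$ can only decrease under refinement, so again $\a(D')\ge$ (refined value) is not what we want — rather, the refined partition $D''\in\cD_{k+1}(X)$ satisfies $\a(D'')\le\a(D')$, and Theorem~\ref{thm:spect-calc} bounds $\a(D'')\le\s_k$; but we need the bound on $\a(D')$ itself. Here I would instead directly use that $R$ being a correspondence into $k+1$ points forces, via connectivity of any spanning tree, the same counting argument as in Lemma~\ref{lem:noncanon-part-less}: the blocks $B_i$ cannot be merged into fewer than needed without an edge crossing, giving $\dis R\ge 1-\s_k$. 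Combining both inequalities yields $\s_k=1-2d_{GH}(X,\D_{k+1})$.

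The main obstacle I anticipate is the careful handling of non-disjoint preimages in an optimal correspondence: a correspondence into $\D_{k+1}$ need not induce a partition into exactly $k+1$ blocks, and one must argue that this ``degeneracy'' cannot make $\dis R$ smaller than $1-\s_k$. The cleanest route is probably to prove a lemma stating that for any $R\in\cR(X,\D_{m})$ one has $\dis R\ge\max\{\diam X,\,1-\a(D)\}$ for a suitable partition $D\in\cD_{m'}(X)$ with $m'\le m$ — and then combine it with the monotonicity $\max\{\a(D):D\in\cD_{k+1}\}\ge\max\{\a(D):D\in\cD_{m'}\}$ for $m'\le k+1$, which follows since coarsening a partition increases $\a$ so the $\cD_{k+1}$ maximum dominates — wait, that inequality goes the wrong way, so the right statement is $\s_{k}=\max_{\cD_{k+1}}\a\le\max_{\cD_{m'}}\a=\s_{m'-1}$ for $m'\le k+1$, and one uses that $\dis R$ with $R$ landing in effectively $m'$ points lower-bounds $1-\s_{m'-1}\le 1-\s_k$ only when $m'-1\ge k$; the case $m'-1<k$ is exactly where the connectivity/counting argument from Lemma~\ref{lem:noncanon-part-less} is needed. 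Getting this case analysis right is the crux; everything else is the routine distortion computation above.
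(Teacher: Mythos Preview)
Your framework and the upper bound on $d_{GH}$ are correct and match the paper's approach: partition correspondences give $\dis R = 1-\a(D)$, and Theorem~\ref{thm:spect-calc} yields $2d_{GH}(X,\D_{k+1})\le 1-\s_k$.

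The gap is in the reverse inequality, where you worry at length about correspondences with overlapping preimages and end up with an incomplete case analysis whose monotonicity, as you yourself note, points the wrong way. You are missing one elementary observation that dissolves the difficulty entirely: if the sets $B_i=R^{-1}(i)$ are \emph{not} pairwise disjoint, then some $x\in X$ satisfies $(x,i),(x,j)\in R$ with $i\ne j$, and this single pair contributes $\bigl||xx|-|ij|\bigr|=|0-1|=1$ to the distortion. Hence any non-partition correspondence has $\dis R\ge 1$. Since partition correspondences exist (because $k+1\le n$) and achieve $\dis R=1-\a(D)<1$, every optimal correspondence is automatically a genuine partition into exactly $k+1$ nonempty blocks, and Theorem~\ref{thm:spect-calc} applies directly to give $\dis R=1-\a(D)\ge 1-\s_k$. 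No extraction of coarser partitions, no refinement, no appeal to Lemma~\ref{lem:noncanon-part-less} is needed.

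This is precisely how the paper handles it: it encodes the overlap case via an indicator $\dl_R\in\{0,1\}$ appearing as a term in $\dis R$, observes that $\dl_R=1$ forces $\dis R\ge1$, and concludes $\cR_\opt(\D_{k+1},X)\ss\cR_p(\D_{k+1},X)$. Once you add this one line, your argument is complete and essentially identical to the paper's.
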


\begin{proof}
Let $R\in\cR(\D_{k+1},X)$, and put $X_i=R(i)$. We denote by $\dl_R$ the number that equals $1$ if there exists an $x\in X$ such that $\#R^{-1}(x)\ge2$, and $0$ otherwise. Notice that $\dl_R=0$ iff $\{X_i\}$ is a partition of $X$.

By definition of the distortion, we have
\begin{multline*}
\dis R=\max
\begin{cases}
\dl_R,\\
\max\{\diam X_i:i=1,\ldots,k+1\},\\
\max\bigl\{1-|x_ix_j|:x_i\in X_i,\,x_j\in X_j,\,i\ne j\bigr\}
\end{cases}
\\
=\max\Bigl[\dl_R,\max\{\diam X_i:i=1,\ldots,k+1\},1-\min\bigl\{|X_iX_j|:i\ne j\bigr\}\Bigr].
\end{multline*}
Since, by assumptions, $\diam X_i\le1/2$ for all $i$, and $1-\min\bigl\{|X_iX_j|:i\ne j\bigr\}<1$, then for every $R$ such that $\{X_i\}$ is a partition of $X$, we have $\dis R<1$. Since for the cases under consideration it holds $k+1\le n$, then those $R$ for which $\{X_i\}$ form partitions of $X$, do exist, hence, for each $R\in\cR_\opt(\D_{k+1},X)$ we have $\dl_R=0$.

Denote by $\cR_p(\D_{k+1},X)$ the set of those correspondences from $\cR(\D_{k+1},X)$ such that $\{X_i\}$ is a partition of $X$. From the above discussion, it follows that
$$
\cR_\opt(\D_{k+1},X)\ss\cR_p(\D_{k+1},X),
$$
hence,
$$
2d_{GH}(\D_{k+1},X)=\min\bigl\{\dis R:R\in\cR_p(\D_{k+1},X)\bigr\}.
$$
Moreover, for every partition $D=\{X_1,\ldots,X_{k+1}\}\in\cD_{k+1}(X)$ the set $\cup_{i=1}^{k+1}\{i\}\x X_i$ is a correspondence from $\cR_p(\D_{k+1},X)$. In other words, all partitions $D\in\cD_{k+1}(X)$ are generated by correspondences from $\cR_p(\D_{k+1},X)$.

Let us choose an arbitrary $R\in\cR_p(\D_{k+1},X)$. Since $|X_iX_j|\le\diam X\le1/2$, then
$$
1-\min\bigl\{|X_iX_j|:i\ne j\bigr\}\ge1/2\ge\max\{\diam X_i:i=1,\ldots,k+1\},
$$
thus,
$$
\dis R=1-\min\bigl\{|X_iX_j|:i\ne j\bigr\}=1-\a\bigl(\{X_i\}\bigr).
$$
By Theorem~\ref{thm:spect-calc}, we get
\begin{multline*}
2d_{GH}(\D_{k+1},X)=\min\Bigl[1-\a\bigl(\{X_i\}\bigr):R\in\cR_p(\D_{k+1},X)\Bigr]=\\ =1-\max\bigl[\a(D):D\in\cD_{k+1}(X)\bigr]=1-\s_k.
\end{multline*}
\end{proof}

The next result follows immediately from Theorem~\ref{thm:spectrum-as-GH} and Proposition~\ref{prop:scale}.

\begin{cor}\label{cor:spectrum-as-GH}
Let $X\in\cM$ be a finite metric space, $\s(X)=(\s_1,\ldots,\s_{n-1})$, $\l\ge2\diam X$. Then $\s_k=\l-2d_{GH}(X,\l\D_{k+1})$.
\end{cor}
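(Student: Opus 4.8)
The plan is to deduce Corollary~\ref{cor:spectrum-as-GH} from Theorem~\ref{thm:spectrum-as-GH} by rescaling. The key observation is that the $\mst$-spectrum scales linearly: if we replace the metric on $X$ by $\l$ times itself, obtaining $\l X$, then every minimum spanning tree on $X$ becomes a minimum spanning tree on $\l X$ with all edge lengths multiplied by $\l$ (the combinatorial structure of $\MST$ depends only on the order of the pairwise distances, which is unchanged), so $\s(\l X)=\l\,\s(X)$. In particular, if $\s(X)=(\s_1,\dots,\s_{n-1})$ then the $k$-th coordinate of $\s(\l X)$ is $\l\s_k$.

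First I would fix $\l\ge2\diam X$ and consider the space $Y=\tfrac1\l X$, i.e. $X$ with all distances divided by $\l$. Then $\diam Y=\tfrac1\l\diam X\le\tfrac12$ by the hypothesis $\l\ge2\diam X$, so $Y$ satisfies the assumptions of Theorem~\ref{thm:spectrum-as-GH}. Applying that theorem to $Y$, and writing $\s(Y)=(\tau_1,\dots,\tau_{n-1})$, we get $\tau_k=1-2d_{GH}(Y,\D_{k+1})$ for each $k$. By the scaling remark above, $\tau_k=\tfrac1\l\s_k$, so $\s_k=\l\tau_k=\l\bigl(1-2d_{GH}(Y,\D_{k+1})\bigr)=\l-2\l\,d_{GH}(Y,\D_{k+1})$.

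It remains to rewrite $\l\,d_{GH}(Y,\D_{k+1})$ as $d_{GH}(X,\l\D_{k+1})$. Here I would invoke Proposition~\ref{prop:scale}: since $X=\l Y$ and $\l\D_{k+1}=\l(\D_{k+1})$ by definition of the notation $\l\D_n$, we have $d_{GH}(X,\l\D_{k+1})=d_{GH}(\l Y,\l\D_{k+1})=\l\,d_{GH}(Y,\D_{k+1})$. Substituting this into the previous display yields $\s_k=\l-2d_{GH}(X,\l\D_{k+1})$, which is exactly the claim.

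The only point requiring any care — and it is essentially the step the excerpt labels ``follows immediately'' — is the identity $\s(\l X)=\l\,\s(X)$; one should note that it holds because scaling the metric is a monotone bijection on distances, hence $\MST(\l X)$ consists precisely of the trees of $\MST(X)$ with rescaled edge lengths, and then $\s$ (which just reads off and sorts those lengths) scales accordingly. Everything else is a direct application of Theorem~\ref{thm:spectrum-as-GH} and Proposition~\ref{prop:scale}, so there is no real obstacle; the argument is a short change of variables.
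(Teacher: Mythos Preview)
Your argument is correct and is exactly the rescaling that the paper has in mind when it says the corollary ``follows immediately from Theorem~\ref{thm:spectrum-as-GH} and Proposition~\ref{prop:scale}'': pass to $Y=\tfrac1\l X$, apply the theorem, and scale back using Proposition~\ref{prop:scale} together with the obvious identity $\s(\l X)=\l\,\s(X)$. There is nothing to add.
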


\begin{rk}\label{rk:pseudo}
It is easy to see that, under assumptions of Corollary~\ref{cor:spectrum-as-GH}, for each $m>n$ it holds $d_{GH}(X,\l\D_m)=\l/2$, therefore, the right hand side of the equality from this corollary vanishes.
\end{rk}

From Remark~\ref{rk:pseudo} and Theorem~\ref{thm:spectrum-as-GH} we immediately get a formula for calculating the length of a minimum spanning tree.

\begin{cor}\label{cor:spectrum-as-GH-mod}
Let $X\in\cM$ be a finite metric space and $\l\ge2\diam X$, then
$$
\mst X=\sum_{k=1}^\infty\bigl[\l-2d_{GH}(X,\l\D_{k+1})\bigr]=\l(\#X-1)-2\sum_{k=1}^{\#X-1}d_{GH}(X,\l\D_{k+1}).
$$
\end{cor}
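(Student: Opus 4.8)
The plan is to derive Corollary~\ref{cor:spectrum-as-GH-mod} directly by summing the per-edge formula from Corollary~\ref{cor:spectrum-as-GH} over all indices $k$, after verifying that the infinite series is in fact a finite sum. First I would observe that if $\#X=n$, then by definition the $\mst$-spectrum $\s(X)=(\s_1,\ldots,\s_{n-1})$ has exactly $n-1$ coordinates, since a spanning tree on $n$ vertices has $n-1$ edges, and therefore $\mst X=\sum_{k=1}^{n-1}\s_k$. Applying Corollary~\ref{cor:spectrum-as-GH} to each term gives $\s_k=\l-2d_{GH}(X,\l\D_{k+1})$ for $k=1,\ldots,n-1$ (the hypothesis $\l\ge2\diam X$ is exactly what that corollary requires), so $\mst X=\sum_{k=1}^{n-1}\bigl[\l-2d_{GH}(X,\l\D_{k+1})\bigr]=\l(n-1)-2\sum_{k=1}^{n-1}d_{GH}(X,\l\D_{k+1})$, which is the right-hand equality in the statement.

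Next I would justify rewriting the finite sum $\sum_{k=1}^{n-1}$ as the infinite sum $\sum_{k=1}^\infty$. For $k\ge n$, the index $k+1\ge n+1>n=\#X$, so by Remark~\ref{rk:pseudo} (with $m=k+1$) we have $d_{GH}(X,\l\D_{k+1})=\l/2$, whence $\l-2d_{GH}(X,\l\D_{k+1})=\l-\l=0$. Thus every term of the series beyond index $n-1$ vanishes, the series converges trivially, and $\sum_{k=1}^\infty\bigl[\l-2d_{GH}(X,\l\D_{k+1})\bigr]=\sum_{k=1}^{n-1}\bigl[\l-2d_{GH}(X,\l\D_{k+1})\bigr]$. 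Combining this with the computation of the previous paragraph yields the full chain of equalities asserted in the corollary.

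There is essentially no hard part here: the result is a bookkeeping consequence of Corollary~\ref{cor:spectrum-as-GH} (length of each MST edge as a Gromov--Hausdorff distance) together with Remark~\ref{rk:pseudo} (distance to an oversized simplex is $\l/2$). The only point requiring a moment's care — and the closest thing to an obstacle — is making sure the index bookkeeping is right: the $k$-th spectrum coordinate corresponds to the simplex $\l\D_{k+1}$, so the $n-1$ nonzero terms are indexed by $k=1,\ldots,n-1$ and involve $\l\D_2,\ldots,\l\D_n$, and it is precisely at $k=n$, i.e.\ the simplex $\l\D_{n+1}$, that the terms start to vanish; the stated upper limit $\#X-1=n-1$ is consistent with this. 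One should also note in passing that the paper's own overview sentence mentions simplices $\l\D_2,\ldots,\l\D_{n+1}$, the extra term $\l\D_{n+1}$ simply contributing a harmless zero, so there is no discrepancy. Since both ingredients are already established in the excerpt, the proof is immediate.

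\begin{proof}
Let $n=\#X$, so that, by definition of the $\mst$-spectrum, $\s(X)=(\s_1,\ldots,\s_{n-1})$ and $\mst X=\sum_{k=1}^{n-1}\s_k$. Since $\l\ge2\diam X$, Corollary~\ref{cor:spectrum-as-GH} gives $\s_k=\l-2d_{GH}(X,\l\D_{k+1})$ for each $k=1,\ldots,n-1$, hence
$$
\mst X=\sum_{k=1}^{n-1}\bigl[\l-2d_{GH}(X,\l\D_{k+1})\bigr]=\l(\#X-1)-2\sum_{k=1}^{\#X-1}d_{GH}(X,\l\D_{k+1}).
$$
On the other hand, for every $k\ge n$ we have $k+1>n=\#X$, so by Remark~\ref{rk:pseudo} it holds $d_{GH}(X,\l\D_{k+1})=\l/2$, and therefore $\l-2d_{GH}(X,\l\D_{k+1})=0$. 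Consequently all terms of the series $\sum_{k=1}^\infty\bigl[\l-2d_{GH}(X,\l\D_{k+1})\bigr]$ with $k\ge n$ vanish, and this series equals its partial sum $\sum_{k=1}^{n-1}\bigl[\l-2d_{GH}(X,\l\D_{k+1})\bigr]$. Combining the two displays completes the proof.
\end{proof}
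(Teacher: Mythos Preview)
Your proof is correct and matches the paper's approach exactly: the paper simply remarks that the corollary follows immediately from Remark~\ref{rk:pseudo} together with Theorem~\ref{thm:spectrum-as-GH} (equivalently Corollary~\ref{cor:spectrum-as-GH}), which is precisely the argument you have spelled out in detail.
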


\section{The length of Steiner minimal tree}
\markright{\thesection.~The length of Steiner minimal tree}
Let $X$ be an arbitrary metric space, and $M\ss X$ be its $n$-points subset. Recall that by $\cM(M,X)\ss\cM$ we denote the family of isometry classes of subsets $V\ss X$ such that $M\ss V$ and $\#V\le2n-2$. Let us choose a real number $d$ and put $\cM(M,X,d)=\{V\in\cM(M,X):\diam V\le d\}$.

Corollary~\ref{cor:spectrum-as-GH-mod} and Proposition~\ref{prop:smt-in-terms-mst} imply the following result.

\begin{cor}
Let $M$ be a finite subset of a metric space $X$, and $n=\#M$. Choose an arbitrary $d>\smt(M)$, e.g., $d>(n-1)\diam M$, and an arbitrary $\l\ge2d$, then
$$
\smt(M,X)=\inf\biggl\{\sum_{k=1}^\infty\bigl[\l-2d_{GH}(V,\l\D_{k+1})\bigr]:V\in\cM(M,X,d)\biggr\}.
$$
\end{cor}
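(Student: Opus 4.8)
The plan is to combine the three ingredients already assembled in the excerpt. By Proposition~\ref{prop:smt-in-terms-mst}, we have $\smt(M,X)=\inf_{V\in\cM(M,X)}\mst(V)$, so it suffices to show two things: first, that restricting the infimum from $\cM(M,X)$ to the subfamily $\cM(M,X,d)$ of classes of diameter at most $d$ does not change its value; and second, that for each fixed $V\in\cM(M,X,d)$ the quantity $\mst(V)$ equals the series $\sum_{k=1}^\infty\bigl[\l-2d_{GH}(V,\l\D_{k+1})\bigr]$, which is exactly the content of Corollary~\ref{cor:spectrum-as-GH-mod} applied to $X:=V$, valid since $\l\ge2d\ge2\diam V$.

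For the first point I would argue as follows. Any $V\in\cM(M,X)$ that realizes a value of $\mst$ close to $\smt(M,X)$ can be assumed to come from a tree $G=(V,E)$ on $X$ joining $M$ with $\mst(V)=|G|$ close to $\smt(M,X)$; in particular $|G|\le d$ for any choice $d>\smt(M)$ once we take $V$ close enough to optimal. Since $G$ is a connected graph on $V$ with $M\ss V$, and $V$ is its vertex set, every point of $V$ is joined to $M$ by a path in $G$, hence lies within graph-distance $|G|\le d$ of $M$; but more simply, the diameter of $V$ as a metric subspace of $X$ is bounded by the diameter of $G$ as a weighted graph, which is at most $|G|\le d$. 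Thus $\diam V\le d$, so $V\in\cM(M,X,d)$, and the infimum over $\cM(M,X,d)$ already attains values arbitrarily close to $\smt(M,X)$; combined with the trivial inequality $\inf_{\cM(M,X,d)}\mst\ge\inf_{\cM(M,X)}\mst=\smt(M,X)$, this gives equality. I should also check the parenthetical claim $d>(n-1)\diam M\Rightarrow d>\smt(M)$: indeed the path through the points of $M$ in any order is a connected graph joining $M$ of length at most $(n-1)\diam M$, so $\smt(M)\le(n-1)\diam M<d$.

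For the second point there is essentially nothing to do beyond quoting Corollary~\ref{cor:spectrum-as-GH-mod}: for a finite metric space $V$ and $\l\ge2\diam V$ it states $\mst(V)=\sum_{k=1}^\infty\bigl[\l-2d_{GH}(V,\l\D_{k+1})\bigr]$, the tail of the series being zero by Remark~\ref{rk:pseudo}. One should only make sure the hypothesis is met uniformly: for every $V\in\cM(M,X,d)$ we have $\diam V\le d$ and $\l\ge2d\ge2\diam V$, so the corollary applies to all $V$ in the family simultaneously with the same $\l$. Substituting this identity termwise inside the infimum over $\cM(M,X,d)$ yields the displayed formula.

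I do not expect a serious obstacle here; the one place to be careful is the diameter estimate in the first step, i.e.\ making precise that a near-optimal $V$ can be taken with $\diam V\le d$. The cleanest route is to fix $\varepsilon>0$, pick a tree $G=(V,E)$ joining $M$ with $|G|<\smt(M,X)+\varepsilon$ and $\#V\le2n-2$ (possible by Proposition~\ref{prop:Steiner-regular}), observe $\diam V\le|G|<\smt(M,X)+\varepsilon\le\smt(M)+\varepsilon<d$ for $\varepsilon$ small (using $\smt(M,X)\le\mf$-type bounds is not even needed, just $\smt(M,X)\le\smt(M)$ when $X$ contains $M$ isometrically, or more directly $\smt(M,X)\le(n-1)\diam M$ via the path), so $V\in\cM(M,X,d)$ with $\mst(V)=|G|<\smt(M,X)+\varepsilon$. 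Letting $\varepsilon\to0$ finishes it.
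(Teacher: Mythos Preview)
Your proposal is correct and follows exactly the route the paper indicates: the paper states only that the corollary ``follows from Corollary~\ref{cor:spectrum-as-GH-mod} and Proposition~\ref{prop:smt-in-terms-mst}'' and gives no further argument, so you have simply supplied the missing details of how those two ingredients combine, including the diameter-restriction step the paper leaves implicit. One tiny slip: in your last paragraph you write $\mst(V)=|G|$, but in general only $\mst(V)\le|G|$ holds for a tree $G$ with vertex set $V$; this inequality is all you need, and the rest of your argument goes through unchanged.
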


\section{The length of minimal filling}
\markright{\thesection.~The length of minimal filling}
Let $M$ be an arbitrary $n$-points metric space. Recall that by $\cM(M)\ss\cM$ we denote the family of isometry classes of metric spaces $V$ such that $M\ss V$ and $\#V\le2n-2$. Choose an arbitrary real number $d$ and put $\cM(M,d)=\{V\in\cM(M):\diam V\le d\}$.

Corollary~\ref{cor:spectrum-as-GH-mod} and Proposition~\ref{prop:mf-in-terms-mst} imply the following result.

\begin{cor}
Let $M$ be a finite metric space, and $n=\#M$. Choose an arbitrary $d>\mf(M)$, e.g., $d>(n-1)\diam M$, and an arbitrary $\l\ge2d$, then
$$
\mf(M)=\inf\biggl\{\sum_{k=1}^\infty\bigl[\l-2d_{GH}(V,\l\D_{k+1})\bigr]:V\in\cM(M,d)\biggr\}.
$$
\end{cor}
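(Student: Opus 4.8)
The final statement asserts a formula for $\mf(M)$ as an infimum, over $V\in\cM(M,d)$, of the quantity $\sum_{k=1}^{\infty}\bigl[\l-2d_{GH}(V,\l\D_{k+1})\bigr]$. Proposition~\ref{prop:mf-in-terms-mst} already tells us that $\mf(M)=\inf_{V\in\cM(M)}\mst(V)$, so the task reduces to rewriting $\mst(V)$ for each relevant $V$ using the Gromov--Hausdorff formula from Corollary~\ref{cor:spectrum-as-GH-mod}, and then checking that restricting the index set from $\cM(M)$ to the smaller set $\cM(M,d)=\{V\in\cM(M):\diam V\le d\}$ does not change the infimum.

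First I would fix $d>\mf(M)$ (noting that $d>(n-1)\diam M$ works, since $\mf(M)$ is bounded above by the length of a spanning star on $M$, which is at most $(n-1)\diam M$) and fix $\l\ge 2d$. Then for every $V\in\cM(M,d)$ we have $\diam V\le d\le\l/2$, i.e.\ $\l\ge 2\diam V$, so Corollary~\ref{cor:spectrum-as-GH-mod} applies verbatim and gives
$$
\mst(V)=\sum_{k=1}^{\infty}\bigl[\l-2d_{GH}(V,\l\D_{k+1})\bigr],
$$
where, by Remark~\ref{rk:pseudo}, all but the first $\#V-1$ terms vanish. Hence
$$
\inf_{V\in\cM(M,d)}\mst(V)=\inf\biggl\{\sum_{k=1}^{\infty}\bigl[\l-2d_{GH}(V,\l\D_{k+1})\bigr]:V\in\cM(M,d)\biggr\},
$$
and the proof is complete once we know the left-hand side equals $\mf(M)$.

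**The one point that needs a small argument is why the diameter restriction is harmless.** On one hand $\cM(M,d)\ss\cM(M)$, so $\inf_{V\in\cM(M,d)}\mst(V)\ge\inf_{V\in\cM(M)}\mst(V)=\mf(M)$. On the other hand, since $d>\mf(M)$, there exists by definition $V_0\in\cM(M)$ with $\mst(V_0)$ arbitrarily close to $\mf(M)$, in particular with $\mst(V_0)<d$; but any tree realizing $\mst(V_0)$ has length $<d$, so the distance between any two of its vertices (being bounded by a path in the tree) is $<d$, whence $\diam V_0\le\mst(V_0)<d$ and $V_0\in\cM(M,d)$. Therefore the infimum over $\cM(M,d)$ is also $\le\mf(M)$, giving equality. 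Combining this with the displayed rewriting of $\mst(V)$ yields the claimed formula.

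**The main (and only) obstacle** is this last diameter bookkeeping — making sure that the auxiliary parameter $d$ is chosen large enough that the near-optimal spaces $V$ for $\mf(M)$ survive the restriction $\diam V\le d$, yet that $\l\ge 2d\ge 2\diam V$ still holds so that Corollary~\ref{cor:spectrum-as-GH-mod} is legitimately invokable for each such $V$. Once the inequalities $\diam V\le\mst(V)<d\le\l/2$ are lined up correctly, everything else is a direct substitution from the two cited results, exactly as in the analogous Steiner-tree corollary proved just above using Proposition~\ref{prop:smt-in-terms-mst}.
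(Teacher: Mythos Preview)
Your proposal is correct and follows exactly the route the paper indicates: the paper's ``proof'' consists solely of the sentence that the result follows from Corollary~\ref{cor:spectrum-as-GH-mod} and Proposition~\ref{prop:mf-in-terms-mst}, and you have simply spelled out that implication, including the (suppressed) verification that $\diam V\le\mst(V)$ via the triangle inequality along a spanning tree, which justifies restricting to $\cM(M,d)$. Nothing further is needed.
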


\end{document}